\let\mathcal\mathscr
\newtheorem{theorem}{\bf Theorem}[section]
\newtheorem{proposition}[theorem]{\bf Proposition}
\newtheorem{lemma}[theorem]{\bf Lemma}
\newtheorem{corollary}[theorem]{\bf Corollary}
\newtheorem{definition}[theorem]{\bf Definition}
\begin{document}

\title{On the star-critical Ramsey number of a forest versus complete graphs}
\author{Azam Kamranian, Ghaffar Raeisi$^{\small 1}$\vspace{.6 cm}\\
{\footnotesize Department of Mathematical Sciences, Shahrekord University, Shahrekord, P.O.Box 115, Iran}\\
{\footnotesize azamkamranian@stu.sku.ac.ir,~~ g.raeisi@sci.sku.ac.ir }\\}

\date{ }

{\footnotesize\medskip\maketitle\footnotetext[1] {Corresponding author}}

\begin{abstract}
{\footnotesize
Let $G$ and $G_1, G_2, \ldots , G_t$ be given graphs. By $G\rightarrow (G_1, G_2, \ldots , G_t)$ we mean if the edges of $G$ are arbitrarily colored by $t$ colors, then for some $i$, $1\leq i\leq t$, the spanning subgraph of $G$ whose edges are colored with the $i$-th color, contains a copy of $G_i$. The Ramsey number $R(G_1, G_2, \ldots, G_t)$ is the smallest positive integer $n$ such that $K_n\rightarrow (G_1, G_2, \ldots , G_t)$ and the size Ramsey number $\hat{R}(G_1, G_2, \ldots , G_t)$ is defined as  $\min\{|E(G)|:~G\rightarrow (G_1, G_2, \ldots , G_t)\}$. Also, for given graphs $G_1, G_2, \ldots , G_t$ with $r=R(G_1, G_2, \ldots , G_t)$, the star-critical Ramsey number $R_*(G_1, G_2, \ldots , G_t)$ is defined as  $\min\{\delta(G):~G\subseteq K_r, ~G\rightarrow (G_1, G_2, \ldots , G_t)\}$. In this paper, the Ramsey number and also the star-critical Ramsey number of a forest versus any number of complete graphs will be computed exactly in terms of the Ramsey number of complete graphs. As a result, the computed star-critical Ramsey number is used to give a tight bound for the size Ramsey number of a forest versus a complete graph.}

\end{abstract}

\bigskip
\section{Introduction}
In this paper, we are only concerned with undirected simple finite graphs and we follow \cite{Boundy} for terminology and notation not defined here. For a given graph $G$, we denote its vertex set, edge set, maximum degree and minimum degree of $G$ by $V(G)$, $E(G)$, $\Delta(G)$ and $\delta(G)$, respectively. Also, for given disjoint subsets $U$ and $W$ of $V(G)$, we use $E[U,W]$ to denote the set of all edges between $U$ and $W$ and we use $G[U]$ to denote the subgraph of $G$ induced by the vertices of $U$. A {\it clique} in a graph is a set of mutually adjacent vertices and  the maximum size of a clique in a graph $G$ is called the {\it clique number} of $G$. As usual, the star graph on $n+1$ vertices is denoted by $K_{1,n}$ and the complete graph on $n$ vertices is denoted by $K_n$.  In this paper, we use $K_k(n_1,\ldots ,n_k)$ to denote the complete $k$-partite graph in which the $i$-th part, $1\leq i\leq k$,  has $n_i$ vertices. In addition, for a given red/blue coloring of the edges of a graph $G$, we use $G^r$ and $G^b$ to denote the spanning subgraphs of $G$ induced by the edges of colors red and blue, respectively.

\medskip
A {\it tree} is a connected graph without cycles and a {\it forest} is a disjoint union of trees.  In this paper, for a given forest $F$, we use $n(F)$ to denote the number of vertices of the largest component of $F$ and for $i=1,2,\ldots, n(F)$, we use $k_i(F)$ to denote the number of components of $F$ with exactly $i$ vertices. Moreover, for a given forest $F$, we define  $C(F)=\{i:~k_i(F)\not=0\}$ and the {\it variety} of $F$, denoted by $q(F)$, is the number of components of $F$ with different sizes, i.e. $q(F)=|C(F)|$.

\bigskip
For given graphs $G$ and $G_1, G_2, \ldots , G_t$, we write $G\rightarrow (G_1, G_2, \ldots , G_t)$ if, when the edges of $G$ are colored in any fashion with $t$ colors, then for some $i$, $1\leq i\leq t$,  the spanning subgraph of $G$ whose edges are colored with the $i$-th color, contains a copy of $G_i$. For given graphs $G_1, G_2, \ldots , G_t$, the {\it Ramsey number} $R(G_1, G_2, \ldots, G_t)$ is the smallest positive integer $n$ such that $K_n\rightarrow (G_1, G_2, \ldots, G_t)$. The existence of such a positive integer is guaranteed by Ramsey's classical result \cite{Ramsey}. For a survey on Ramsey theory and results in this area, we refer the reader to the regularly updated survey by Radziszowski \cite{survey}. A $t$-tuple $(G_1, G_2, \ldots , G_t)$ of graphs with $r=R(G_1, G_2, \ldots , G_t)$,  is called {\it Ramsey-full} if $K_r\rightarrow(G_1, G_2, \ldots , G_t)$, but $K_r-e\nrightarrow(G_1, G_2, \ldots , G_t)$, for each $e\in E(K_r)$. Also, a $t$-coloring of the edges of $K_n$ is called a $(G_1, G_2, \ldots , G_t)$-{\it free coloring} if for each $i$, $1\leq i\leq t$,  the spanning subgraph of $G$ whose edges are colored with the $i$-th color, does not contain $G_i$ as a subgraph.

\medskip
Classical Ramsey numbers may be thought of in a more general setting. For given graphs $G_1, G_2, \ldots , G_t$ and any monotone graph parameter $\rho$, one may define the {\it $\rho$-Ramsey number}, denoted by $R_{\rho}(G_1, G_2, \ldots , G_t)$, to be
$\min\{\rho(H):~H\rightarrow(G_1, G_2, \ldots, G_t)\}.$ Note that $R_{\rho}(G_1, G_2, \ldots , G_t)$ is the classical Ramsey number $R(G_1, G_2, \ldots , G_t)$ when $\rho(H)$ is the number of vertices of $H$. The study of parameter Ramsey numbers dates back to the 1970s \cite{SBE} and  since then, many researchers have
studied this quantity when $\rho(H)$ is the clique number of $H$ \cite{12,23,25} (giving way to the study of Folkman numbers), when
$\rho(H)$ is the number of edges in $H$ \cite{4,9,10,s10,s11,27} (called the size Ramsey number), when $\rho(H)$ is the chromatic number of $H$ \cite{SBE,28} (called the chromatic Ramsey number) or when $\rho(H)$ is the maximum degree of $H$ \cite{SBE,14,16,17,18} (called the degree Ramsey number).

\medskip
In this paper, we are interested in the size Ramsey number ($\rho(H)=|E(H)|$) and the  star-critical Ramsey number ($\rho(H)=\delta(H)$ and $V(H)=R(G_1, G_2, \ldots , G_t)$). For given graphs $G_1, G_2, \ldots , G_t$ with $r=R(G_1, \ldots , G_t)$,  the {\it star-critical Ramsey number}
 $R_*(G_1, G_2, \ldots, G_t)$ is defined as 
$\min\{\delta(H):~ H\subseteq K_{r},~ H\rightarrow(G_1, G_2, \ldots , G_t)\},$ and the {\it size Ramsey number} $\hat{R}(G_1, G_2, \ldots , G_t)$ is defined as  the minimum number of edges of a graph $H$ such that $H\rightarrow (G_1, G_2, \ldots , G_t)$.

\medskip
Let $K_{n}\sqcup K_{1,k}$ be the graph obtained from $K_n$ by adding a new vertex $v$ adjacent to $k$ vertices of $K_{n}$. It is easy to see that the star-critical Ramsey number $R_*(G_1, G_2, \ldots , G_t)$ is equivalent to finding the smallest integer $k$ such that $K_{r-1}\sqcup K_{1,k}\rightarrow(G_1, G_2, \ldots , G_t)$, where $r=R(G_1, G_2, \ldots , G_t)$. Size and star-critical Ramsey numbers were investigated by several authors (see \cite{4,9,10,s9,s10,s11,hoka,s18,27}). The exact value of the star-critical Ramsey number for a tree versus a complete graph was determined in \cite{hoka} and the star-critical Ramsey number of a matching (as a special case of a forest) versus a complete graph was determined in \cite{s10,s18}. In this paper, the Ramsey number and also the star-critical Ramsey number of a forest versus any number of complete graphs will be determined exactly in terms of the Ramsey number of complete graphs. More precisely, if $m_1, m_2, \ldots, m_t$ are positive integers, $r=R(K_{m_1},\ldots,K_{m_t})$ and $F$ is a forest without isolated vertices, then
$$R(F,K_{m_1},\ldots,K_{m_t})=\max_{j\in C(F)} \{(j-1)(r-2)+ \sum_{i=j}^{n(F)} ik_i(F) \}.$$ Moreover, if $j_0$ is the smallest value of $j$ that realizes ${\max_{j\in C(F)}} \{(j-1)(r-2)+ \sum_{i=j}^{n(F)}ik_i(F)\}$, then
$$R_*(F, K_{m_1},\ldots,K_{m_t})=(j_0-1)(r-3)+ \sum_{i=j_0}^{n(F)}ik_i(F).$$ In addition, the star-critical Ramsey number $R_*(F,K_{m})$ is used to give a tight bound for the size Ramsey number of a forest versus a complete graph.

\section{Main Results}

\hspace{.6 cm}To determine the exact value of the star-critical Ramsey number of a forest versus complete graphs, we need some theorems and lemmas. First, we start with the following theorem \cite{S}, giving the exact value of the Ramsey number of a forest versus a complete graph.

\begin{theorem}{\rm (\cite{S})}\label{Rforest}
If $F$ is an arbitrary forest, then
$$R(F, K_m)=\max_{j\in C(F)} \{(j-1)(m-2)+ \sum_{i=j}^{n(F)} ik_i(F) \}.$$
\end{theorem}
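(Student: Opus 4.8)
The plan is to prove both inequalities directly, writing $s_j=\sum_{i=j}^{n(F)}ik_i(F)$ for the number of vertices of $F$ lying in components of size at least $j$, and $N_j=(j-1)(m-2)+s_j$, so that the claimed value is $N=\max_{j\in C(F)}N_j$. Throughout I work with a red/blue coloring of a complete graph, where the red subgraph plays the role of the graph that must contain $F$, and "no blue $K_m$" is equivalent to the red subgraph having independence number at most $m-1$. I will establish $R(F,K_m)\ge N$ by an explicit coloring and $R(F,K_m)\le N$ by a greedy embedding argument, the latter being the substantial half.

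For the lower bound, let $j$ be an index attaining the maximum. First I would color $K_{N_j-1}$ as follows: take a red clique $A$ on $s_j-1$ vertices together with $m-2$ pairwise disjoint red cliques $B_1,\dots,B_{m-2}$, each on $j-1$ vertices, and color every edge between two distinct parts blue. The total number of vertices is $(s_j-1)+(m-2)(j-1)=N_j-1$. A blue clique meets each of the $m-1$ red cliques in at most one vertex, so it has at most $m-1$ vertices and there is no blue $K_m$. Since the red components are exactly $A,B_1,\dots,B_{m-2}$, and every component of $F$ of size at least $j$ is a tree too large to embed into any $B_k$ (which has only $j-1$ vertices), all such components would have to embed into $A$; but their total size is $s_j$, exceeding $|A|=s_j-1$, so no red $F$ exists. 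Hence $K_{N_j-1}\nrightarrow(F,K_m)$ and $R(F,K_m)\ge N_j=N$.

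For the upper bound the key tool is the following degeneracy lemma: in a red/blue coloring of a complete graph with no blue $K_m$, if a vertex set $S$ contains no red subgraph of minimum red-degree at least $i-1$, then the red graph on $S$ is $(i-2)$-degenerate, hence properly red-colorable with $i-1$ colors, so $S$ splits into at most $i-1$ blue cliques, each of size at most $m-1$, giving $|S|\le(i-1)(m-1)$. Contrapositively, whenever $|S|\ge(i-1)(m-1)+1$ there is a red subgraph $S'\subseteq S$ with minimum red-degree at least $i-1$, and a standard greedy (leaf-by-leaf) argument embeds any prescribed tree on $i$ vertices into the red edges of $S'$.

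Now suppose $K_N$ is colored with no blue $K_m$; I embed the components of $F$ one at a time in order of non-increasing size, maintaining the set $S$ of still-unused vertices and discarding the $i$ vertices used by each embedded component of size $i$. Suppose, for contradiction, the process first fails at a component of size $i^\ast\in C(F)$. At that moment every component of size greater than $i^\ast$ and some number $c\le k_{i^\ast}-1$ of the size-$i^\ast$ components have been embedded, so the number of used vertices is at most $\sum_{i>i^\ast}ik_i(F)+(k_{i^\ast}-1)i^\ast=s_{i^\ast}-i^\ast$, and the lemma gives $|S|\le(i^\ast-1)(m-1)$. Therefore
$$N\le(i^\ast-1)(m-1)+s_{i^\ast}-i^\ast=(i^\ast-1)(m-2)+s_{i^\ast}-1=N_{i^\ast}-1,$$
contradicting $N\ge N_{i^\ast}$. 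Hence the embedding never fails and $K_N\rightarrow(F,K_m)$, so $R(F,K_m)\le N$. The main obstacle is organizing this counting correctly: the extra slack of one unembedded size-$i^\ast$ component (the "$-i^\ast$" above) is exactly what converts the wasteful factor $m-1$ from the single-tree degeneracy lemma into the sharp coefficient $m-2$ of the theorem, so the order of embedding and the bookkeeping of used vertices must be handled with care.
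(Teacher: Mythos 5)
Your proof is correct, but there is nothing in the paper to compare it against: the paper states Theorem~\ref{Rforest} as a known result quoted from Stahl~\cite{S} and never proves it, so your argument is a genuine, self-contained substitute rather than a variant of an in-paper proof. Two remarks on how it relates to what the paper does contain. Your lower-bound coloring --- $m-2$ disjoint red cliques on $j-1$ vertices plus one red clique on $s_j-1$ vertices, with all cross edges blue --- is exactly the extremal coloring the paper reuses in its own lower-bound arguments (in Lemma~\ref{starktreecomplete} and Theorem~\ref{starforestcomplete}), and your verification that it is $(F,K_m)$-free is sound. Your upper bound, by contrast, is structurally different from the paper's machinery for the star-critical results (induction on $k$ and on the variety $q(F)$ via the uniqueness Lemma~\ref{uniq}): you use a degeneracy argument --- if a vertex set $S$ spans no red subgraph of minimum red degree at least $i-1$, then the red graph on $S$ is $(i-2)$-degenerate, hence partitions into at most $i-1$ red-independent classes, each a blue clique of size at most $m-1$ since there is no blue $K_m$, giving $|S|\le (i-1)(m-1)$ --- combined with the standard greedy embedding of an $i$-vertex tree into a red subgraph of minimum degree at least $i-1$. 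The delicate point is the bookkeeping, and you handle it correctly: embedding components in non-increasing order of size guarantees that a first failure at a component of size $i^\ast$ occurs when at most $s_{i^\ast}-i^\ast$ vertices are used, so $N\le (i^\ast-1)(m-1)+s_{i^\ast}-i^\ast=(i^\ast-1)(m-2)+s_{i^\ast}-1=N_{i^\ast}-1$, contradicting $N\ge N_{i^\ast}$; with an arbitrary embedding order this count would not close, so the ordering is essential, as you yourself observe. What your route buys is a short, fully elementary proof with no induction on the structure of $F$; what the paper's citation buys is brevity. The only caveat is the tacit assumption $m\ge 2$ (for $m=1$ both the formula and your construction degenerate), an assumption the paper also makes implicitly in all of its applications.
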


As an easy and immediate consequence of Theorem \ref{Rforest}, we have the following corollary.

\begin{corollary}\label{ramseyktreecomplete}
If $T_n$ is an arbitrary tree on $n$ vertices, then $R(kT_n, K_m)=(n-1)(m-2)+nk.$
\end{corollary}

\medskip
In addition, the exact value of the star-critical Ramsey number for a tree versus a complete graph was determined in \cite{hoka}. In fact, in \cite{hoka} it was proved that for a given tree $T_n$, all $(T_n,K_m)$-free colorings of the complete graph on  $R(T_n,K_m)-1$ vertices are unique.

\begin{theorem}{\rm (\cite{hoka})}\label{staruniq}
For given $n$ and $m$ with $n,m\geq 2$, let $r = R(T_n, K_m) = (n-1)(m-1) + 1$. If $c$ is a $(T_n, K_m)$-free coloring of $G=K_{r-1}$, then the resulting graph must have a unique red/blue coloring as follows.
\begin{eqnarray*}
&G^r&=(m-1)K_{n-1}\\
&G^b&=K_{m-1}(n-1, n-1,\ldots ,n-1).
\end{eqnarray*}
\end{theorem}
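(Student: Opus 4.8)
The plan is to prove uniqueness by induction on $m$, the key reduction being that one can always ``peel off'' a red $K_{n-1}$ and recurse. Throughout, write $N=r-1=(n-1)(m-1)$ and let $c$ be a $(T_n,K_m)$-free coloring of $G=K_N$. First I would record two structural facts about the red graph $G^r$. Since a graph of minimum degree at least $n-1$ contains every tree on $n$ vertices (embed $T_n$ greedily, adding one leaf at a time: when a new leaf is attached, the image of its already-embedded neighbour has degree at least $n-1$ while at most $n-2$ of those neighbours are occupied, leaving a free one), the absence of a red $T_n$ forces $\delta(H)\le n-2$ for every subgraph $H$ of $G^r$; that is, $G^r$ is $(n-2)$-degenerate and in particular $\chi(G^r)\le n-1$. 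On the other hand, the absence of a blue $K_m$ says precisely that $\alpha(G^r)=\omega(G^b)\le m-1$.

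Next I would pin the degrees down. For every vertex $v$, the blue neighbourhood $N_b(v)$ carries no blue $K_{m-1}$ (such a clique together with $v$ would be a blue $K_m$) and no red $T_n$, so its coloring is $(T_n,K_{m-1})$-free; by Theorem \ref{Rforest} this gives $|N_b(v)|\le R(T_n,K_{m-1})-1=(n-1)(m-2)$, whence the red degree satisfies $d_{G^r}(v)\ge (N-1)-(n-1)(m-2)=n-2$. Thus $\delta(G^r)\ge n-2$. Combining this with $(n-2)$-degeneracy, each red component $C$ has $\delta(C)\ge n-2$ (so $|C|\ge n-1$) and $\chi(C)\le n-1$, whence $\alpha(C)\ge |C|/(n-1)$; summing over components and using $\alpha(G^r)\le m-1=N/(n-1)$ forces equality throughout, so every red component satisfies $\alpha(C)=|C|/(n-1)$ and $(n-1)\mid |C|$.

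The heart of the argument, and the step I expect to be the main obstacle, is to upgrade this to the statement that every red component is exactly a clique $K_{n-1}$; I would do this by the peeling/induction scheme. Using $(n-2)$-degeneracy together with $\delta(G^r)\ge n-2$, fix a vertex $v_0$ with $d_{G^r}(v_0)=n-2$ and set $A=\{v_0\}\cup N_{G^r}(v_0)$, so $|A|=n-1$. The crucial claim is that $A$ is a red clique and that no red edge leaves $A$; granting this, $A$ is a full red component equal to $K_{n-1}$, and every vertex of $A$ is joined in blue to all of $V\setminus A$. Consequently $V\setminus A$ contains no blue $K_{m-1}$ (it would extend through any vertex of $A$ to a blue $K_m$) and of course no red $T_n$, so the restriction of $c$ to the $(n-1)(m-2)=R(T_n,K_{m-1})-1$ vertices of $V\setminus A$ is $(T_n,K_{m-1})$-free and extremal; the induction hypothesis then identifies it as the unique coloring with red part $(m-2)K_{n-1}$ and blue part $K_{m-2}(n-1,\ldots,n-1)$. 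Reattaching $A$ yields exactly $G^r=(m-1)K_{n-1}$ and $G^b=K_{m-1}(n-1,\ldots,n-1)$, completing the induction; the base case $m=2$ is immediate, since $K_{n-1}$ with no blue edge must be all red.

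It remains to justify the crucial claim, which is where the real work lies and where the hypotheses must be used in full. To show that no red edge leaves $A$, I would argue that otherwise the red component of $v_0$ is connected on at least $n$ vertices while still having minimum degree at least $n-2$, and combine the greedy embedding with the equalities $\alpha(C)=|C|/(n-1)$ and $(n-1)\mid|C|$ from the previous paragraph to exhibit a red $T_n$ or to contradict $(n-2)$-degeneracy; to show that $A$ is a clique, I would apply the absence of a blue $K_m$ to configurations consisting of two red neighbours of $v_0$ together with one representative chosen from each red clique of $V\setminus A$. This last analysis is the delicate part, because a minimum-degree bound alone does not force a prescribed tree to appear (for instance a star), so the independence-number restriction $\alpha(G^r)\le m-1$ must genuinely be invoked to rule out oversized, regular-like red components.
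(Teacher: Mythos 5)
The paper itself offers no proof of this statement---it is quoted from \cite{hoka}---so your attempt has to stand on its own, and it does not: there is a genuine gap at exactly the point you flag as ``the crucial claim.'' Your preliminary work is correct (the $(n-2)$-degeneracy of $G^r$, the bound $\delta(G^r)\geq n-2$ via $|N_b(v)|\leq R(T_n,K_{m-1})-1$, and the component equalities $\alpha(C)=|C|/(n-1)$), but the entire content of the theorem is the claim that $A=\{v_0\}\cup N_{G^r}(v_0)$ is a red clique with no red edge leaving it, and for this you only describe what you ``would'' do: combine greedy embedding, degeneracy and the equalities to reach a contradiction. That sketch does not go through as written. As you yourself concede, a minimum-degree bound cannot force a prescribed tree (a star already defeats it), and nothing in your outline actually produces a copy of the \emph{specific} tree $T_n$ inside a hypothetical oversized red component, nor a violation of degeneracy. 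Even in the simplest case $T_n=K_{1,n-1}$, ruling out a connected red component of order $2(n-1)$ with $\Delta\leq n-2$ and $\alpha=2$ already requires a Brooks-type argument; for a general tree your outline has no mechanism at all. So the proof is incomplete precisely at its heart.

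The fix is contained in your own setup, but with the steps in the opposite order. Observe that $V\setminus A=N_b(v_0)$, so a blue $K_{m-1}$ inside $V\setminus A$ extends through $v_0$ \emph{alone} to a blue $K_m$; hence the induction hypothesis applies to $V\setminus A$ \emph{without} the crucial claim, and immediately yields that $V\setminus A$ is colored with red part $(m-2)K_{n-1}$ (red cliques $Q_1,\ldots,Q_{m-2}$) and blue complete multipartite. With that rigid structure in hand, the claim becomes easy, via the one elementary lemma your proposal is missing: a red $K_{n-1}$ together with a pendant red edge contains \emph{every} tree on $n$ vertices (embed $T_n$ minus a leaf into the clique so that the leaf's neighbour lands on the attachment vertex, then place the leaf on the pendant vertex). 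Consequently, any red edge from $A$ to some $Q_i$ creates a red $T_n$, so all edges between $A$ and $V\setminus A$ are blue; and then any blue edge inside $A$, together with one vertex chosen from each $Q_i$, forms a blue $K_m$. This establishes both halves of the claim, shows $A$ is a full red component isomorphic to $K_{n-1}$, and closes your induction---the delicate independence-number analysis you anticipated is never needed.
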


\medskip
\begin{corollary}{\rm (\cite{hoka})}\label{startreecomplete}
For any tree $T_n$ on $n$ vertices, $R_*(T_n ,K_m) =(n-1)(m-2)+1.$
\end{corollary}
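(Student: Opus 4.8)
The plan is to use the uniqueness statement of Theorem~\ref{staruniq} to reduce the entire problem to understanding the single new vertex. First, recall from Theorem~\ref{Rforest} (equivalently Corollary~\ref{ramseyktreecomplete} with $k=1$) that $r:=R(T_n,K_m)=(n-1)(m-1)+1$, and that $R_*(T_n,K_m)$ is the least $k$ with $K_{r-1}\sqcup K_{1,k}\to(T_n,K_m)$. In any $(T_n,K_m)$-free colouring of $K_{r-1}\sqcup K_{1,k}$ the induced colouring of the copy of $K_{r-1}$ is itself $(T_n,K_m)$-free, so by Theorem~\ref{staruniq} it must be the unique colouring: the red graph is $(m-1)K_{n-1}$ with parts $A_1,\dots,A_{m-1}$ (each a red clique of size $n-1$) and all edges between parts are blue, so the blue graph is $K_{m-1}(n-1,\dots,n-1)$. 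Thus the only freedom lies in the $k$ edges joining the new vertex $v$ to $K_{r-1}$, and the proof splits into matching lower and upper bounds on $k$.

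For the lower bound I would exhibit a $(T_n,K_m)$-free colouring with minimum degree $(n-1)(m-2)$. Take the unique colouring above, join $v$ to every vertex of $A_1\cup\cdots\cup A_{m-2}$ (exactly $(n-1)(m-2)$ vertices), and colour all of these edges blue. The red graph is unchanged, namely $(m-1)K_{n-1}$, whose components have only $n-1$ vertices, so it contains no red $T_n$. A blue clique avoiding $v$ lies in $K_{m-1}(n-1,\dots,n-1)$ and so has at most $m-1$ vertices, while a blue clique through $v$ can use at most one vertex from each of $A_1,\dots,A_{m-2}$ (vertices inside one part are red-adjacent), giving at most $(m-2)+1=m-1$ vertices; hence there is no blue $K_m$. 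Therefore $K_{r-1}\sqcup K_{1,(n-1)(m-2)}\nrightarrow(T_n,K_m)$, so $R_*(T_n,K_m)\ge (n-1)(m-2)+1$.

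For the upper bound set $k=(n-1)(m-2)+1$ and suppose, for contradiction, that a $(T_n,K_m)$-free colouring of $K_{r-1}\sqcup K_{1,k}$ exists; as above its restriction to $K_{r-1}$ is the unique colouring, and $v$ is non-adjacent to only $(m-1)(n-1)-k=n-2$ vertices. The crucial observation is that $v$ cannot carry any red edge: if $v$ were red-adjacent to some $u\in A_i$, then, writing $T'=T_n-\ell$ for a leaf $\ell$ of $T_n$ attached at a vertex $w$, I would embed the tree $T'$ on $n-1$ vertices into the red clique $A_i$ with $w$ mapped to $u$ (possible since $K_{n-1}$ contains every tree on $n-1$ vertices with a prescribed image for one vertex) and then map $\ell$ to $v$ along the red edge $uv$, producing a red $T_n$. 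Hence every edge at $v$ is blue. But since only $n-2<n-1=|A_i|$ vertices are missed by $v$, the vertex $v$ has a neighbour in each of the $m-1$ parts, all of them blue-neighbours; choosing one per part yields a blue $K_{m-1}$ every vertex of which is blue-adjacent to $v$, i.e.\ a blue $K_m$. Either outcome contradicts freeness, so $K_{r-1}\sqcup K_{1,k}\to(T_n,K_m)$ and $R_*(T_n,K_m)\le(n-1)(m-2)+1$.

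Combining the two bounds yields the claimed value. The only genuinely delicate point is the red-embedding step, where the statement must hold for an \emph{arbitrary} tree $T_n$ rather than a fixed one; the leaf-deletion argument together with the vertex-transitivity of $K_{n-1}$ is exactly what makes a \emph{single} red edge at $v$ enough to force a red $T_n$, and this is the step I would write out most carefully.
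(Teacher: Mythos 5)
Your proof is correct, and it follows exactly the strategy the paper itself employs: the paper cites this corollary from \cite{hoka} without proof, but its own argument for the generalization (Lemma~\ref{starktreecomplete}) is the same three-step scheme you use --- a lower-bound coloring joining $v$ blue to all but one part of the extremal coloring, then Theorem~\ref{staruniq} to force the unique coloring on $K_{r-1}$, and finally the observation that $v$ can carry no red edge (your leaf-deletion embedding makes this step explicit) so that a blue neighbor in every part yields a blue $K_m$.
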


\medskip
Now, in the sequel, we determine the star-critical Ramsey number of a disjoint union of trees versus a complete graph. For this purpose, first we characterize the class of all $(kT_n,K_m)$-free colorings on $R(kT_n,K_m)-1$ vertices and then we use such a characterization to find the star-critical Ramsey number $R_*(kT_n,K_m)$.

\begin{definition}\label{uni}
Let $n\geq 2$, $m\geq 3$ be given integers and $r = R(kT_n, K_m) = (n-1)(m-2)+nk$. Define the family $\mathbb{G}_{n,m,k}$ as $\{G_0, G_1, \ldots, G_{k-1}\}$, where each $G_i$, $0\leq i\leq k-1$, is a copy of $K_{r-1}$ with the following red/blue edge coloring:
\begin{eqnarray*}
G_i:~ &G_i^r&=K_{(n-1)+nk_{_1}}\cup K_{(n-1)+nk_{_2}}\cup\ldots\cup K_{(n-1)+nk_{_{m-2}}}\cup K_{(k-i)n-1}\\
&G_i^b&=K_{m-1}((n-1)+nk_{_1}, (n-1)+nk_{_2},\ldots ,(n-1)+nk_{_{m-2}}, (k-i)n-1),
\end{eqnarray*}
where, $k_1,k_2,\ldots,k_{m-2}$ are non-negative integers with $\sum_{j=1}^{m-2} k_{_j}=i$. Such a red/blue coloring of $G_i\in \mathbb{G}_{n,m,k}$ is shown in Figure \ref{uniqpic}.
\begin{figure}[h!]
\includegraphics[width=\textwidth]{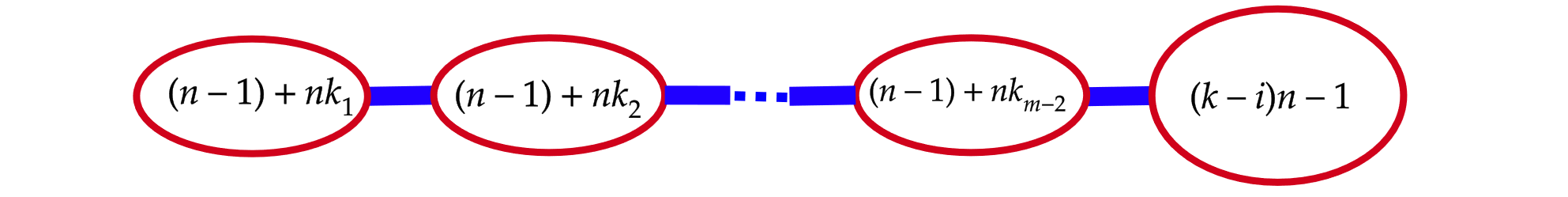}\caption{Red/blue coloring of $G_i\in \mathbb{G}_{n,m,k}$, described in Definition \ref{uni}.}\label{uniqpic}
\end{figure}
\end{definition}

\begin{lemma}\label{uniq}
Let $n\geq 2$, $m\geq 3$ and $k\geq 1$ be given and $r = R(kT_n, K_m) = (n-1)(m-2)+nk$. If $c$ is a $(kT_n, K_m)$-free coloring of $K_{r-1}$, then the resulting graph must belong to the family $\mathbb{G}_{n,m,k}$ described in Definition \ref{uni}.
\end{lemma}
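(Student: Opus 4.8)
The plan is to run an extremal analysis that reduces the general case to the single-tree case already settled in Theorem~\ref{staruniq}, and then to promote the resulting local information into a global clique partition. Throughout, write $N=r-1=(n-1)(m-2)+nk-1$ and fix a $(kT_n,K_m)$-free coloring $c$ of $K_N$. The one general fact I will lean on repeatedly is that a graph with minimum degree at least $n-1$ contains every tree on $n$ vertices (greedy embedding); consequently a $T_n$-free graph has no subgraph of minimum degree $\ge n-1$, hence it is $(n-2)$-degenerate and therefore properly $(n-1)$-colorable.

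First I would pin down the red tree-packing. Let $\mathcal{T}$ be a maximum family of vertex-disjoint red copies of $T_n$, say $|\mathcal{T}|=p\le k-1$, and let $W$ be the set of the remaining $N-pn$ vertices. By maximality the red graph on $W$ is $T_n$-free, hence $(n-1)$-colorable; each color class is red-independent, i.e. a blue clique, and since $c$ has no blue $K_m$ each class has at most $m-1$ vertices. Thus $|W|\le (n-1)(m-1)$, which after substituting $N$ forces $p\ge k-1$, whence $p=k-1$ and $|W|=(n-1)(m-1)=R(T_n,K_m)-1$. The induced coloring on $W$ is itself $(T_n,K_m)$-free on exactly $R(T_n,K_m)-1$ vertices, so Theorem~\ref{staruniq} applies: $W$ splits into red cliques $C_1,\dots,C_{m-1}$, each of size $n-1$, with every edge between distinct $C_i$'s blue.

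The heart of the argument is to extend this picture from $W$ to the whole vertex set, that is, to show the red graph is a disjoint union of cliques (equivalently, blue is complete multipartite). The first step is easy: for each vertex $u$ outside $W$, not every $C_i$ can contain a blue-neighbor of $u$, for otherwise a transversal $\{w_1,\dots,w_{m-1}\}$ with $w_i\in C_i$ together with $u$ would be a blue $K_m$; hence $u$ is red-complete to some $C_i$, so $C_i\cup\{u\}$ is a red $K_n$. The difficulty, which I expect to be the \emph{main obstacle}, is to organize these attachments consistently and rule out a vertex that is red-adjacent into two different cliques so as to leave an induced red $P_3$. I would resolve this by an exchange argument on the packing: since $C_i\cup\{u\}$ is a red copy of $T_n$, it can replace the member of $\mathcal{T}$ through $u$, producing a \emph{different} maximum packing whose leftover is again forced by Theorem~\ref{staruniq} to be $(m-1)K_{n-1}$. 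Comparing the rigid structures obtained from several such repackagings should force each red neighborhood to be a union of whole cliques, hence the red components are cliques; any surviving induced red $P_3$ would instead yield a $k$-th disjoint red $T_n$, contradicting $p=k-1$.

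Once the red graph is known to be a disjoint union of, say, $M$ cliques of sizes $s_1,\dots,s_M$, the conclusion is purely arithmetic. Blue is then complete $M$-partite, so the absence of a blue $K_m$ gives $M\le m-1$, while the absence of $k$ disjoint red copies of $T_n$ gives $\sum_j\lfloor s_j/n\rfloor\le k-1$. Writing $s_j=nq_j+t_j$ with $0\le t_j\le n-1$ and using $\sum_j s_j=N$ together with $\sum_j t_j\le M(n-1)$, these inequalities force $M=m-1$, every $t_j=n-1$, and $\sum_j q_j=k-1$. Hence the red clique sizes are exactly $(n-1)+nq_j$ with $\sum_j q_j=k-1$, which is precisely a member $G_i$ of the family $\mathbb{G}_{n,m,k}$ of Definition~\ref{uni}, completing the proof.
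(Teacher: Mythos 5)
Your reduction to the single-tree case is a genuinely different route from the paper's (the paper inducts on $k$: it deletes one red copy of $T_n$, applies the induction hypothesis to the remaining $K_{r-1-n}$, and reattaches the deleted tree by a two-case analysis), and most of your ingredients are correct. The degeneracy count forcing a maximum red packing of exactly $k-1$ copies of $T_n$ with leftover $W$ of size $(n-1)(m-1)$ is right; Theorem~\ref{staruniq} then does give $W=C_1\cup\dots\cup C_{m-1}$ with red $(n-1)$-cliques and blue cross-edges; the observation that every $u\notin W$ is red-complete to some $C_i$ is right; and the closing arithmetic (including $\sum_j\lfloor s_j/n\rfloor\le k-1$ forcing $M=m-1$, $t_j=n-1$, $\sum_j q_j=k-1$) is correct and does identify the coloring as a member of $\mathbb{G}_{n,m,k}$.

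However, there is a genuine gap exactly at what you yourself call the main obstacle: ``comparing the rigid structures obtained from several such repackagings should force'' is not an argument, and the one concrete mechanism you do name is wrong. What has to be proved is that the red graph is a disjoint union of cliques, and this splits into three claims: (i) each packing tree $T$ spans a red clique and \emph{all} of its vertices are red-complete to the same $C_{i(T)}$ and joined entirely by blue edges to every other $C_j$ (this does follow from your exchange, by applying Theorem~\ref{staruniq} to the repacked leftover $(W\setminus C_i)\cup(V(T)\setminus\{u\})$ for two or three different choices of $u\in V(T)$, but you never extract these conclusions); (ii) if $i(T)\ne i(T')$ then every edge between $V(T)$ and $V(T')$ is blue --- here the contradiction is indeed a $k$-th red tree, but producing it needs a construction, e.g.\ that two disjoint red $(2n-1)$-cliques $C_{i(T)}\cup V(T)$ and $C_{i(T')}\cup V(T')$ joined by a red edge $xy$ contain three pairwise disjoint red copies of $T_n$ (embed one copy using a leaf at $y$ and the rest in the first clique through $x$; the $n$ untouched vertices of the first clique and the $2n-2$ untouched vertices of the second give two more); and (iii) if $i(T)=i(T')$ then every edge between $V(T)$ and $V(T')$ is \emph{red} --- and here the contradiction is not a $k$-th red tree at all: a blue edge $xy$ between two trees attached to the same $C_i$, together with one vertex from each $C_j$, $j\ne i$, forms a blue $K_m$, since by (i) both $x$ and $y$ are blue-complete to those cliques. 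Claim (iii) shows that your stated mechanism (``any surviving induced red $P_3$ would yield a $k$-th disjoint red $T_n$'') fails for precisely the red $P_3$'s whose blue edge lies inside a prospective red component; those are excluded by blue $K_m$'s, not by red trees. The gap is fillable --- with (i)--(iii) the sets $D_i=C_i\cup\bigcup\{V(T):i(T)=i\}$ are red cliques pairwise joined in blue, and your arithmetic finishes --- but as written the heart of the proof is missing.
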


\begin{proof}
We use induction on $k$ to prove the theorem. Let $c$ be an arbitrary $(kT_n, K_m)$-free coloring of $G=K_{r-1}$. If $k=1$, then by Theorem \ref{staruniq}, $c$ must be the red/blue coloring of $G_0$ described in Definition \ref{uni}. Thus, let $k\geq 2$. As $k\geq 2$ and $R(kT_n,K_m)-1\geq R(T_n, K_m)$, such a coloring must contain a red monochromatic copy of $T_n$ (say $T$). Delete the vertices of $T$ from $G$ and let $H$ be the resulting graph. Clearly,
$$|V(H)| = R(kT_n,K_m)-1-|V(T)| = (n-1)(m-2)+(k-1)n-1.$$
Since $|V(H)|=R((k-1)T_n,K_m)-1$ and the coloring induced by $c$ on $H$, say $c'$, is a $((k-1)T_n,K_m)$-free coloring, then  by the induction hypothesis, $c'$ must be a red/blue coloring of $H$ belonging to the family $\mathbb{G}_{n,m,k-1}$, say $H_i$, such that for some non-negative integers $k_1,k_2,\ldots,k_{m-2}$ with $\sum_{j=1}^{m-2} k_{_j}=i$,
\begin{eqnarray*}
H_i:~&H_i^r&=K_{(n-1)+nk_{_1}}\cup K_{(n-1)+nk_{_2}}\cup\ldots\cup K_{(n-1)+nk_{_{m-2}}}\cup K_{(k-i-1)n-1}\\
&H_i^b&=K_{m-1}((n-1)+nk_{_1}, (n-1)+nk_{_2},\ldots ,(n-1)+nk_{_{m-2}}, (k-i-1)n-1).
\end{eqnarray*}

\noindent Let $V_1, V_2,\ldots , V_{m-1}$ be the partite sets of $H_i^b$ such that for each $l$, $1\leq l\leq m-2$, $|V_l|=(n-1)+nk_l$ and $|V_{m-1}|=(k-1-i)n-1$. Note that for each $l$, $1\leq l\leq m-1$, $G^r[V_l]=K_{|V_l|}$. Now, consider an arbitrary vertex $v\in V(T)$ and set $V'=V(T)\setminus \{v\}$. We consider the following cases.

\bigskip
\noindent{\bf Case 1:} $E[v,V_{m-1}]\subseteq E(G^r)$.

\medskip
In this case,  $E[V',V_l]\subseteq E(G^b)$, for each $l=1,\ldots ,m-2$. Indeed, if there is a red edge $v'v_l\in E[V',V_l]$, for some $l$, $1\leq l\leq m-2$, then $\{v\}\cup V_{m-1}$ induces a copy of $K_{(k-i-1)n}$ in $G^r$ and $V_l\cup \{v'\}$ contains $k_l+1$ red copies of $T_n$. Therefore, the red subgraph of $G$ induced by $V_1\cup\ldots\cup V_{m-2}$ contains  $k_1+k_2+\ldots+(k_l+1)+\ldots+k_{m-2}=i+1$ copies of $T_n$ and so these copies together with the $(k-i-1)$ copies of $T$ in $G^r[\{v\}\cup V_{m-1}]$ form a $kT_n\subseteq G^r$, a contradiction.

\medskip
It is easy to see that $E[V',V_{m-1}]\subseteq E(G^r)$, otherwise, there is a blue copy of $K_m$ in $G$ by choosing an arbitrary vertex $v_j\in V_j$, for each $j$,  $1\leq j\leq m-2$, and the end vertices of the blue edge in $E[V',V_{m-1}]$. Now, by a similar argument as the first paragraph, we conclude that $E[v, V_l]\subseteq E(G^b)$, for each $l$, $1\leq l\leq m-2$.

\medskip
In addition, $G[V(T)]\subseteq G^r$, otherwise, if $u,u'\in V(T)$ and $uu'\in E(G^b)$, then the induced subgraph $G[u,u',V_1,\ldots,V_{m-2}]$ contains a blue monochromatic copy of $K_m$, which is impossible. Therefore, $G[V_l]$, $1\leq l\leq m-2$, and $G[V(T)\cup V_{m-1}]$ are subgraphs of $G^r$ and $|V(T)\cup V_{m-1}|=(k-i)n-1$. Therefore, the resulting graph is isomorphic to  $G_{i}\in\mathbb{G}_{n,m,k}$.

\bigskip
\noindent{\bf Case 2:} $uv\in E(G^b)$, for some $u\in V_{m-1}$.

In this case, the vertex $v$ cannot have a blue neighbour in each $V_l$, for $l=1,\ldots ,m-2$, otherwise, if $u_i$ is a blue neighbor of $v$ in $V_i$, $1\leq i\leq m-2$, then $G[v,u,u_1,\ldots,u_{m-2}]$ form a blue copy of $K_m$, a contradiction. So, without loss of generality, we may assume that $E[v,V_1]\subseteq E(G^r)$. If there is a red edge in $E[V',V_l]$, for some $l$, $2\leq l \leq m-1$, say $v'v_l$, then $(V_1\cup v)\cup\ldots\cup (V_l\cup v')\cup\ldots\cup V_{m-1}$ contains $k$ disjoint red copies of $T_n$, a contradiction. Therefore, $E[V',V_l]\subseteq E(G^b)$, for each  $l$,  $2\leq l\leq m-1$.  Now, since $K_m\nsubseteq G^b$, then $G[V']\subseteq G^r$ and $E[V',V_1]\subseteq E(G^r)$.   Also, since $kT_n\nsubseteq G^r$, then $E[v,V_l]\subseteq E(G^b)$, $l=2,\ldots ,m-1$ and so, $E[v,V']\subseteq E(G^r)$. Hence, $G[V(T)]\subseteq G^r$. Therefore,  $G[V(T)\cup V_{1}]$ and  $G[V_l]$, $2\leq l\leq m-1$ are subgraphs of $G^r$ and so, the resulting graph is isomorphic to  $G_{i-1}\in\mathbb{G}_{n,m,k}$,  which completes the proof.
\end{proof}

\begin{lemma}\label{starktreecomplete}
For every $n\geq 2$ and $m\geq 3$, $R_*(kT_n, K_m)=(n-1)(m-3)+nk$.
\end{lemma}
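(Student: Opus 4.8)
The plan is to use the reformulation recorded in the introduction: with $r=R(kT_n,K_m)=(n-1)(m-2)+nk$ from Corollary \ref{ramseyktreecomplete}, the number $R_*(kT_n,K_m)$ is the least integer $d$ for which $K_{r-1}\sqcup K_{1,d}\rightarrow(kT_n,K_m)$, and I would prove the two matching bounds $R_*\ge (n-1)(m-3)+nk$ and $R_*\le (n-1)(m-3)+nk$. Before either bound, I would isolate the one structural fact that drives everything: in each member of $\mathbb{G}_{n,m,k}$ the red graph is a disjoint union of cliques whose orders, namely $(n-1)+nk_j$ and $(k-i)n-1$, are all congruent to $n-1$ modulo $n$. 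Consequently a red clique of order $a\equiv n-1\pmod n$ contains exactly $\lfloor a/n\rfloor$ pairwise disjoint copies of $T_n$, the total over all parts being $k-1$; and, crucially, attaching a single new vertex by one red edge to such a clique raises its packing number by one (put that vertex as a leaf of a $T_n$, embed the rest inside the clique, leaving a clique of order divisible by $n$). I expect this ``one red edge buys one extra tree'' observation to be the engine of both directions.

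For the lower bound I would exhibit a free coloring of $K_{r-1}\sqcup K_{1,d}$ with $d=(n-1)(m-3)+nk-1$. Take the base $G_0\in\mathbb{G}_{n,m,k}$, whose red graph is $(m-2)K_{n-1}\cup K_{nk-1}$ and whose blue graph is the complete $(m-1)$-partite graph on these parts, single out one red part $P$ of order $n-1$, and join the new vertex $v$ in blue to every vertex outside $P$ while leaving $v$ nonadjacent to $P$. Then $\deg(v)=(r-1)-(n-1)=(n-1)(m-3)+nk-1=d$. There is no red $kT_n$, since $v$ meets no red edge and the red graph still packs only $k-1$ trees; and there is no blue $K_m$, since $v$ sees only $m-2$ of the $m-1$ parts in blue, so every blue clique through $v$ has at most $m-1$ vertices. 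This gives $R_*(kT_n,K_m)\ge (n-1)(m-3)+nk$.

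For the upper bound I would assume a $(kT_n,K_m)$-free coloring of $K_{r-1}\sqcup K_{1,d}$ with $d=(n-1)(m-3)+nk$ and derive a contradiction. Restricting to $K_{r-1}$ yields, by Lemma \ref{uniq}, some $G_i\in\mathbb{G}_{n,m,k}$ with red parts $V_1,\dots,V_{m-1}$, each of order at least $n-1$. By the packing observation, a single red edge from $v$ into any $V_\ell$ would let that component pack $\lfloor|V_\ell|/n\rfloor+1$ copies of $T_n$ while the other parts still pack $k-1-\lfloor|V_\ell|/n\rfloor$, producing a red $kT_n$; hence every edge at $v$ is blue. Counting non-neighbours, $v$ fails to be adjacent to exactly $(r-1)-d=n-2$ vertices, which is strictly fewer than the order of any part. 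Therefore $v$ has a blue neighbour in every $V_\ell$, and one such neighbour per part together with $v$ spans a blue $K_m$, the required contradiction. Hence $R_*(kT_n,K_m)\le (n-1)(m-3)+nk$, completing the proof.

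The step I would be most careful about is the packing claim for cliques of order $\equiv n-1\pmod n$: verifying, uniformly over all $G_i$, both that such a clique realizes exactly $\lfloor a/n\rfloor$ disjoint copies of $T_n$ and that one extra incident edge strictly increases this count. Once that is secured, the whole argument rests on the clean pigeonhole inequality $n-2<n-1$ between the number of non-neighbours of $v$ and the minimum order of a part, which simultaneously forbids a missing part (forcing blue $K_m$) and makes every red edge fatal (forcing red $kT_n$).
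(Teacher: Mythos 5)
Your proposal is correct and follows essentially the same route as the paper: the identical lower-bound construction (the coloring $G_0$ with a new vertex blue-joined to all but one part of order $n-1$), and the identical upper bound via Lemma \ref{uniq}, forcing all edges at $v$ to be blue and then using the count of $n-2$ non-neighbours to find a blue neighbour in every part, hence a blue $K_m$. The only difference is cosmetic: you spell out the clique-packing fact (a red clique of order $\equiv n-1 \pmod{n}$ packs exactly $\lfloor a/n\rfloor$ copies of $T_n$, and one red edge from $v$ raises this by one) which the paper leaves implicit in the phrase ``otherwise, $G$ contains a red $kT_n$.''
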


\begin{proof}
Let $r = R(kT_n, K_m) = (n-1)(m-2)+nk$ and $r_*$ be the claimed number for $R_*(kT_n, K_m)$. Let  $H=K_{r-1}\sqcup K_{1,r_*-1}$ and partition the vertices of $K_{r-1}$ into $(m-1)$ parts $V_1, V_2,\ldots,V_{m-1}$ such that for every $i$, $1\leq i\leq m-2$, $|V_i|=n-1$  and $|V_{m-1}|=nk-1$. Color all edges contained in $V_i$, $1\leq i\leq m-1$, by red and the rest by blue. Now, add a vertex $v$ with all blue edges adjacent to every vertex in $V_i$, $2\leq i\leq m-1$. Since the chromatic number of the blue subgraph of $H$ is $m-1$, then the blue graph do not contain $K_m$ as a subgraph. In the red subgraph, the components of size $(n-1)$ does not contain any red tree $T_n$ and in the component of size $(nk-1)$ there are at most $(k-1)$ red copies of $T_n$. Therefore, we have a $(kT_n,K_m)$-free coloring of $H$ and so $H\nrightarrow(kT_n,K_m)$.

\medskip
For the upper bound, let $G=K_{r-1}\sqcup K_{1,r_*}$ and let $v$ be the vertex of degree $r_*$ in $G$. We prove that $G\rightarrow(kT_n,K_m)$. On the contrary, let $G\nrightarrow(kT_n,K_m)$ and consider a $(kT_n,K_m)$-free coloring of $G$. This coloring induces a  $(kT_n,K_m)$-free  coloring of $G\setminus\{v\}\cong K_{r-1}$ and so, by Lemma \ref{uniq}, this coloring have the form specified as described in Definition \ref{uni}. Since the blue subgraph of $G\setminus\{v\}$ is a $(m-1)$-partite graph,  let $V_1, \ldots ,V_{m-2}, V_{m-1}$ be the partite sets of $G\setminus\{v\}$. Note that by the structure of the $(kT_n,K_m)$-free  coloring of  $G\setminus\{v\}$, all $r_*=(n-1)(m-3)+nk$ edges incident with $v$ must be blue, otherwise, $G$ contains a red $kT_n$, a contradiction.  As $\deg(v) \geq r_*$, one can easily conclude that $v$ has at least one neighbor in each  $V_i$, $1\leq i\leq m-1$, and hence, $G$ contains a blue copy of $K_m$, a contradiction. This contradiction shows that  $G\rightarrow(kT_n,K_m)$, which completes the proof.
\end{proof}

 


Now, we are ready to prove our first main result. In the following, we determine the star-critical Ramsey number of an arbitrary forest versus a complete graph. Hereafter, for simplicity, for a given forest $F$ and also positive integers $a$ and $b$, we use $\sum_{a}^{b}$ to denote $\sum_{i=a}^{b} ik_i(F)$ and briefly, we use $n$ and $k_i$ to denote $n(F)$ and $k_i(F)$, respectively.

\begin{theorem}\label{starforestcomplete}
Let $F$ be an arbitrary forest without isolated vertices, $m\geq 3$ and let $j_0$ be the smallest value of $j$ that realizes $\max_{j\in C(F)} \{(j-1)(m-2)+ \sum_{i=j}^{n(F)}ik_i(F)\}$. Then,

$$ R_*(F, K_m)=(j_0-1)(m-3)+ \displaystyle\sum_{i=j_0}^{n(F)}ik_i(F).$$
\end{theorem}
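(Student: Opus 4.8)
The plan is to prove matching bounds, following the template of Lemma~\ref{starktreecomplete} but accounting for the several component sizes of $F$. Write $r=R(F,K_m)$; since $j_0$ attains the maximum in Theorem~\ref{Rforest} we have $r=(j_0-1)(m-2)+\sum_{j_0}^{n}$, and the claimed value is $r_*:=(j_0-1)(m-3)+\sum_{j_0}^{n}=r-(j_0-1)$. Recall that $R_*(F,K_m)$ is the least $k$ with $K_{r-1}\sqcup K_{1,k}\to(F,K_m)$, and that $j_0\geq 2$ while every component of $F$ is a tree on at least two vertices and hence has a leaf; this last fact is used crucially below.

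For the lower bound $R_*(F,K_m)\geq r_*$ I would exhibit an $(F,K_m)$-free coloring of $K_{r-1}\sqcup K_{1,r_*-1}$. Partition the $r-1$ vertices of $K_{r-1}$ into $m-2$ \emph{small} classes of order $j_0-1$ and one \emph{big} class of order $\sum_{j_0}^{n}-1$ (these orders sum to $r-1$, and $\sum_{j_0}^{n}\geq j_0\geq 2$), colour every edge inside a class red and every edge between classes blue, and join the extra vertex $v$ in blue to all of the big class and to $m-3$ of the small classes. Placing $v$ in the class of the one omitted small block shows the blue graph is $(m-1)$-partite, so it has no $K_m$; and since $v$ has no red edge, a red $F$ would force all components of size $\geq j_0$ into the big class (the small classes have order $j_0-1<j_0$), which is one vertex too small. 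As $\deg(v)=\bigl(\sum_{j_0}^{n}-1\bigr)+(m-3)(j_0-1)=r_*-1$, this gives $K_{r-1}\sqcup K_{1,r_*-1}\not\to(F,K_m)$.

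For the upper bound I would show $G:=K_{r-1}\sqcup K_{1,r_*}\to(F,K_m)$. The linchpin is a structural lemma extending Lemma~\ref{uniq}: in every $(F,K_m)$-free colouring of $K_{r-1}$ the red graph is a disjoint union of $m-1$ cliques and the blue graph is the complete $(m-1)$-partite graph on the corresponding classes $V_1,\dots,V_{m-1}$, each class has order at least $j_0-1$, and the components of $F$ of size $<j_0$ pack into the $m-2$ smallest classes. Granting this, suppose $G$ admits an $(F,K_m)$-free colouring and let $v$ be the vertex of degree $r_*$; restricting to $G-v\cong K_{r-1}$ gives the structure above. \textbf{(a)} Every edge at $v$ is blue: a red edge from $v$ to some $u\in V_t$ lets one component of $F$ use $v$ as a leaf attached to $u$, which is equivalent to packing $F$ into the $m-1$ classes after increasing the order of $V_t$ by one; by the extremality of $j_0$ this packing succeeds and produces a red $F$, a contradiction. \textbf{(b)} Since then all $r_*$ blue edges at $v$ must meet every class — missing a class $V_i$ would force $\deg(v)\leq (r-1)-|V_i|\leq (r-1)-(j_0-1)=r_*-1$ — choosing a blue neighbour of $v$ in each $V_i$ together with $v$ yields a blue $K_m$, again a contradiction. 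Hence $G\to(F,K_m)$ and $R_*(F,K_m)\leq r_*$.

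I expect the structural lemma, and inside it the packing step used in (a), to be the main obstacle. For $kT_n$ all components are equal, so Lemma~\ref{uniq} could be obtained by deleting one red $T_n$ and inducting; here the induction must carry the full multiset $\{k_i\}$ and must guarantee both that the small components always fit the $m-2$ small classes and that raising any single class by one vertex completes a red $F$. This is exactly where $j_0$ being the \emph{smallest} maximiser of $f(j)=(j-1)(m-2)+\sum_{j}^{n}$ enters: the strict inequalities $f(j)<f(j_0)$ for $j<j_0$ and the inequalities $f(j)\leq f(j_0)$ for $j>j_0$ are what turn into the bin-packing estimates that make the small components fit and keep a component of size exactly $j_0$ available to occupy any freed block of order $j_0$. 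Carrying out this packing feasibility uniformly in the sizes $k_i$, rather than case by case as for a single tree, is the technical heart I would expect to spend the most effort on.
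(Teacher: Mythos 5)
Your lower bound construction is exactly the paper's and is correct. The upper bound, however, rests entirely on a structural lemma that you do not prove: that \emph{every} $(F,K_m)$-free colouring of $K_{r-1}$ has red graph equal to a disjoint union of $m-1$ cliques, blue graph complete $(m-1)$-partite, with every class of order at least $j_0-1$. You yourself flag this as ``the technical heart,'' and that is precisely the gap: granting the lemma, your steps (a) and (b) do go through, but without it you have only reduced the theorem to an unproven statement that is at least as hard as the theorem itself. (A secondary point: in step (a) the reason the augmented packing succeeds is not ``extremality of $j_0$'' but extremality of $r$ --- if $F$ failed to pack into the class sizes after one class is enlarged by a vertex, the union-of-cliques colouring on those sizes would be an $(F,K_m)$-free colouring of $K_r$, contradicting $r=R(F,K_m)$; note also that step (b) silently uses the class-size bound $|V_i|\geq j_0-1$, which is itself part of the unproven lemma.) Your proposed route to the lemma --- an induction generalizing Lemma \ref{uniq} that ``carries the full multiset $\{k_i\}$'' --- is exactly where the difficulty concentrates: the Case 1/Case 2 analysis in the proof of Lemma \ref{uniq} relies on all components having the same size, and you give no mechanism for controlling which of several component sizes the deleted red tree has, nor for re-establishing the class-size and packing conditions after deletion.

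The paper avoids your structural lemma altogether, and this is the essential difference. It proves the characterization of free colourings only in the single-size case $F=kT_n$ (Lemma \ref{uniq}), deduces $R_*(kT_n,K_m)$ (Lemma \ref{starktreecomplete}), and then handles a general forest by induction on the variety $q(F)$: delete all components of the smallest size $s$ to form $F'$, verify the arithmetic inequality $r_*\geq r'_*$ (where $r'_*$ is the claimed value of $R_*(F',K_m)$), locate a subconfiguration $K_{r'-1}\sqcup K_{1,r'_*}$ inside $G=K_{r-1}\sqcup K_{1,r_*}$ to extract a red copy of $F'$ by the induction hypothesis, and finally check that the leftover vertices (together with the leftover degree of the pendant vertex) are enough to force a red $k_sT_s$ via Lemma \ref{starktreecomplete}. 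This trades your global structure theorem for inequalities among the values $f(j)=(j-1)(m-2)+\sum_{i=j}^{n}ik_i$ at $j_0$, $j_0'$ and $s$, and never needs to know what free colourings of $K_{r-1}$ look like when $q(F)\geq 2$. So your plan is not absurd --- the lemma you want is plausibly true --- but as submitted the proof is incomplete at its decisive step, and the paper's induction on variety shows the full structure theorem is unnecessary.
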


\begin{proof}
Let $r=R(F,K_m)=(j_0-1)(m-2)+ \sum_{j_0}^{n}$ and $r_*$ be the claimed number for $R_*(F, K_m)$. To see $r_*$ is a lower bound for $R_*(F, K_m)$, we show that if $H=K_{r-1}\sqcup K_{1,r_*-1}$ then $H\nrightarrow(F,K_m)$. Partition the vertices of $K_{r-1}$ into $(m-1)$ parts $V_1,\ldots,V_{m-1}$ such that for every $i$, $1\leq i\leq m-2$, $|V_i|=j_0-1$ and $|V_{m-1}|=\sum_{j_0}^{n}-1$. Color all edges with both ends in $V_i$, $1\leq i\leq m-1$, by red and the rest by blue. Now, add a vertex $v$ with all blue edges adjacent to every vertex in $V_i$, $2\leq i\leq m-1$. Clearly, the clique number of the  subgraph of $H$ spanned by the blue edges is $(m-1)$ and so $H^b$ does not contain $K_m$ as a subgraph. Also, $H^r$ does not contain $F$, because in the components of size $(j_0-1)$ there is no red tree of size $j_0$ and in the component of size $(\sum_{j_0}^{n}-1)$ some trees of size $j_0$ or larger are missed. This means that we have a $(F,K_m)$-free coloring of $H$ and so, $H\nrightarrow(F,K_m)$. Therefore, $(j_0-1)(m-3)+ \sum_{j_0}^{n}$ is a lower bound for $R_*(F,K_m)$.

\bigskip
For the upper bound, let $G$ be a graph containing $K_{r-1}$ with an additional vertex $v$ adjacent to at least  $r_*$ vertices of $K_{r-1}$, $G=K_{r-1}\sqcup K_{1,r_*}$,  whose edges are arbitrarily colored red and blue. We suppose that $K_m\nsubseteq G^b$ and by   induction on the variety of $F$, $q(F)$, we prove that the subgraph of $G$ spanned by the red edges contains $F$ as a subgraph. If $q(F)=1$, then all trees in $F$ have the same number of vertices and so  $F=kT_n$, for some positive integers $k$ and $n$. Now, having applied Lemma \ref{starktreecomplete}, it is verified that
$R_*(F,K_m)=(n-1)(m-3)+nk,$
which coincides with the claimed number of the theorem.

\medskip
Now, let $F$ be an arbitrary forest contains trees of different sizes such that $q(F)\geq 2$. Let $s$ and $n$ be the sizes of the smallest and the largest components in $F$, respectively. Delete all trees of size $s$ from $F$ and let $F'$ be the resulting forest. Clearly, $F=F'\cup {k_sT_s}$ and if the size of the smallest tree in $F'$ is $t$, then  $F=\{k_sT_s, k_tT_t,\ldots ,k_nT_n\}$, $F'=\{k_tT_t,\ldots ,k_nT_n\}$ and $|V(F')|=\sum_{t}^{n}$. In the sequel, we prove that $G$ must contain a red copy of $F'$ as a subgraph. For this purpose, let $j_0$ be the smallest value of $j$ that realizes $\max_{j\in C(F')} \{(j-1)(m-2)+ \sum_{j}^{n}\}$. Note that by Theorem \ref{Rforest}, $R(F',K_m)=(j_0'-1)(m-2)+\sum_{j_0'}^{n}$, and since $C(F)\setminus C(F')=\{s\}$, we have either $j_0=s$ or $j_0=j_0'$, which means that $j_0\leq j_0'$. Set $r'_*=(j_0'-1)(m-3)+\sum_{j_0'}^{n}$. As $q(F')<q(F)$, by the induction hypothesis,  $R_*(F',K_m)\leq r'_*$. 

\medskip
Since $C(F')\subseteq C(F)$, then $(j_0-1)(m-2)+\sum_{j_0}^{n} \geq (j_0'-1)(m-2)+\sum_{j_0'}^{n}$, and using $j_0\leq j_0'$, we conclude that 

\begin{eqnarray*}
r_*=(j_0-1)(m-3)+\textstyle\sum_{j_0}^{n} &\geq & (j_0'-1)(m-2)+\textstyle\sum_{j_0'}^{n}-(j_0-1)\\
&\geq & (j_0'-1)(m-2)+\textstyle\sum_{j_0'}^{n}-(j_0'-1)\\
&= & (j_0'-1)(m-3)+\textstyle\sum_{j_0'}^{n}=r'_*.
\end{eqnarray*}
Therefore, $r'_* \leq r_*$ and so, $\deg(v)\geq r_*\geq r'_* \geq R_*(F',K_m)$. Note that $F'\subseteq F$ and  $R(F,K_m) \geq R(F',K_m)$. Set $r'=R(F',K_m)$ and choose $r'-1$ vertices from $G-\{v\}\simeq K_{r-1}$ containing  $r'_*$ vertices from $N(v)$ and let $G'$ be the subgraph of $G$ spanned by the vertex $v$ and the chosen $r'-1$ vertices. Clearly, $G'=K_{r'-1}\sqcup K_{1,r'_*}\subseteq G$ and since $r'_* \geq R_*(F',K_m)$, then   $G'\rightarrow(F',K_m)$. As $G'\subseteq G$ and $G$ does not contain a blue monochromatic copy of $K_m$, we obtain that  $F'\subseteq G^r$. Discard the vertices of such a copy of $F'$ from $G$ and let $H$ be the resulting graph. We prove that there is a red monochromatic copy of $k_sT_s$ in $H$. For this purpose, we consider the following cases.

\bigskip
\noindent{\bf Case 1.} Let $v\in V(F')$.

\medskip
\noindent In this case, $H\subseteq K_{r-1}$ and
 $|V(H)|= r-1-|V(F')\setminus\{v\}|\geq (s-1)(m-2)+sk_s.$
Therefore, $|V(H)| \geq (s-1)(m-2)+sk_s=R(k_sT_s,K_m)$ and so, $H\rightarrow(k_sT_s,K_m)$, which means that there is a  monochromatic red copy of $k_sT_s$ in $H$.

\bigskip
\noindent{\bf Case 2.} Let $v\not\in V(F')$.

\medskip
\noindent In this case, $V(F')\subseteq K_{r-1}$ and  $|V(H)\setminus \{v\}|=|V(K_{r-1})\setminus V(F')|$. Therefore,
\begin{eqnarray*}
|V(H)\setminus \{v\}| &=&  r-1-|V(F')| \\
& =& (j_0-1)(m-2)+\textstyle\sum_{j_0}^{n}-1-\textstyle\sum_{t}^{n}\\
&\geq & (s-1)(m-2)+\textstyle\sum_{s}^{n}-1-\textstyle\sum_{t}^{n}\\
&\geq & (s-1)(m-2)+sk_s-1 = R(k_sT_s,K_m)-1.
\end{eqnarray*}

\bigskip
In addition,
\begin{eqnarray*}
\deg_H(v) \geq r_* -|V(F')| &=& (j_0-1)(m-3)+\textstyle\sum_{j_0}^{n}-\textstyle\sum_{t}^{n}\\
&\geq & (s-1)(m-3)+\textstyle\sum_{s}^{n}-\textstyle\sum_{t}^{n}\\
&=& (s-1)(m-3)+sk_s=R_*(k_sT_s,K_m).
\end{eqnarray*}

Therefore, in this case $|V(H)\setminus \{v\}|\geq R(k_sT_s,K_m)-1$ and $\deg_H(v) \geq R_*(k_sT_s,K_m)$. Having applied Lemma \ref{starktreecomplete}, we conclude that $H\rightarrow (k_sT_s,K_m)$. Since there is no blue $K_m$ in $G$, then $H$ contains a red monochromatic copy of $k_sT_s$, all trees of size $s$ in $F$. Now, this copy of $k_sT_s\subseteq H^r$ with the deleted monochromatic red forest $F'$, form a monochromatic red copy of $F$ in $G$, completing the proof of the theorem.
\end{proof}

\bigskip
\section{Multicolor Results}

\medskip
In this section, the star-critical Ramsey number of an arbitrary forest versus any number of complete graphs will be computed exactly. Before that, we prove the following lemma determining the exact value of the Ramsey number of an arbitrary forest versus any number of complete graphs, in terms of the Ramsey number of complete graphs.

\medskip
\begin{lemma}\label{Rcompletes}
Let $F$ be an arbitrary forest, $m_1,m_2,\ldots,m_t$ be positive integers and let $r=R(K_{m_1},\ldots,K_{m_t})$. Then,
$R(F,K_{m_1},\ldots,K_{m_t})=\max_{j\in C(F)} \{(j-1)(r-2)+ \sum_{i=j}^{n(F)} ik_i(F) \}$.
\end{lemma}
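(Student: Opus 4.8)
The plan is to reduce the multicolour problem to the two-colour case already settled by Theorem \ref{Rforest}. Write $N$ for the claimed quantity, and observe that applying Theorem \ref{Rforest} with $m=r$ gives $N = R(F,K_r)$, where $r = R(K_{m_1},\ldots,K_{m_t})$. Thus it suffices to show $R(F,K_{m_1},\ldots,K_{m_t}) = R(F,K_r)$, and I would establish the two inequalities separately via a merge/split duality between the $t$ complete-graph colours and the single colour that would witness a $K_r$. Throughout I use the shorthand $n = n(F)$, $k_i = k_i(F)$.

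For the upper bound $R(F,K_{m_1},\ldots,K_{m_t}) \le N$, I would take an arbitrary $(t+1)$-colouring of $K_N$, with colour $0$ reserved for $F$ and colours $1,\ldots,t$ for the complete graphs, and amalgamate colours $1,\ldots,t$ into a single colour ``blue'', keeping colour $0$ as ``red''. Since $N = R(F,K_r)$, this red/blue colouring contains either a red copy of $F$, in which case we are done, or a blue copy of $K_r$. In the latter case the edges among these $r$ vertices use, in the original colouring, only colours $1,\ldots,t$, so they form a $t$-colouring of $K_r$; because $r = R(K_{m_1},\ldots,K_{m_t})$, this forces a colour-$i$ copy of $K_{m_i}$ for some $i$, as required.

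For the lower bound I would exhibit an explicit $(F,K_{m_1},\ldots,K_{m_t})$-free $(t+1)$-colouring of $K_{N-1}$. Let $j_0 \in C(F)$ attain the maximum defining $N$, partition the $N-1$ vertices into parts $V_1,\ldots,V_{r-1}$ with $|V_\ell| = j_0 - 1$ for $1 \le \ell \le r-2$ and $|V_{r-1}| = \sum_{i=j_0}^{n} i k_i - 1$, and colour every edge lying inside a part with colour $0$. This red graph is a disjoint union of cliques, all of size at most $j_0 - 1$ except the last of size $\sum_{i=j_0}^{n} i k_i - 1$; exactly as in the proof of Theorem \ref{starforestcomplete}, every component of $F$ of size at least $j_0$ must then be embedded in the single large clique, whose vertex count falls one short of $\sum_{i=j_0}^{n} i k_i$, so there is no red $F$. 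It remains to colour the between-parts edges (those that were ``blue'') with colours $1,\ldots,t$ without creating a colour-$i$ copy of $K_{m_i}$. Here I would fix a $t$-colouring $\chi$ of $K_{r-1}$ containing no colour-$i$ copy of $K_{m_i}$, which exists precisely because $r = R(K_{m_1},\ldots,K_{m_t})$, identify each part $V_\ell$ with vertex $\ell$ of $K_{r-1}$, and colour every edge between $V_a$ and $V_b$ with $\chi(ab)$.

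The step I expect to require the most care is verifying that this final splitting creates no colour-$i$ copy of $K_{m_i}$. The key observation is that the between-parts graph is complete $(r-1)$-partite, so any clique in it meets each part in at most one vertex; a monochromatic colour-$i$ clique therefore projects injectively to a colour-$i$ clique of the same size in $\chi$, which has at most $m_i - 1$ vertices. Hence each colour class $i$ is $K_{m_i}$-free while the red class is $F$-free, giving $K_{N-1} \nrightarrow (F,K_{m_1},\ldots,K_{m_t})$ and completing both the lower bound and the proof. (The degenerate small-$r$ cases, where there are no small parts or no between-parts edges, are absorbed by the same construction.)
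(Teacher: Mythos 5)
Your proposal is correct and follows essentially the same route as the paper: the upper bound merges the $t$ clique-colours into one and invokes Theorem \ref{Rforest} with $m=r$ plus the definition of $r=R(K_{m_1},\ldots,K_{m_t})$, and the lower bound blows up a $(K_{m_1},\ldots,K_{m_t})$-free colouring of $K_{r-1}$ into parts of sizes $j_0-1$ and $\sum_{i=j_0}^{n}ik_i-1$ with internal edges in the forest colour, exactly as in the paper's construction. Your explicit verification that monochromatic cliques project injectively onto cliques of $\chi$ is a detail the paper leaves implicit, but the argument is the same.
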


\begin{proof}
Let the maximum for the claimed number occur for $j=j_0$ and $p_0=\sum_{i=j_0}^{n(F)} ik_i(F)$. For the lower bound, begin with a $t$-coloring of the edges of $K_{r-1}$, say colors $\alpha_1,\ldots,\alpha_t$, that has no copy of $K_{m_i}$ in color $\alpha_i$ for any $i$, $1\leq i\leq t$. Consider $r-2$ vertices of $K_{r-1}$ and replace each of them by a complete graph of order $j_0-1$ and replace the remaining vertex of $K_{r-1}$ by a complete graph of order $p_0-1$ and let $G$ be the resulting graph. Color all  edges contained in these complete graphs  by color $\alpha_0$.  Each edge in the original graph $K_{r-1}$ expands into a copy of $K_{j_0-1,j_0-1}$ or  a copy of $K_{j_0-1,p_0-1}$ in $G$. Color all edges in these subgraphs with the same color that its original edge in $K_{r-1}$ had. Note that the subgraph of $G$ induced by the edges of color $\alpha_0$ does not contain $F$, because the parts have sizes $(j_0-1)$ and $(p_0-1)$ and in the components of size $(j_0-1)$ there is no tree of size $j_0$ and in the component of size $(p_0-1)$ some trees of size $j_0$ or larger are missed. This yields a $(t+1)$-edge coloring of $K_{(j_0-1)(r-2)+p_0-1}$ with colors $\alpha_0, \alpha_1,\ldots,\alpha_t$ without a monochromatic copy of $F$ in color $\alpha_0$ and no monochromatic copy of $K_{m_i}$ in color $\alpha_i$, $1\leq i\leq t$. Therefore, $K_{(j_0-1)(r-2)+p_0-1}\nrightarrow (F,K_{m_1},\ldots,K_{m_t})$.

\medskip
For the upper bound, let $s$ be the claimed number for $R(F,K_{m_1},\ldots,K_{m_t})$ and consider an arbitrary $(t+1)$-edge coloring $c$ of the complete graph $G=K_s$ by colors $\alpha_0, \alpha_1,\ldots,\alpha_t$. Recolor all edges of colors $\alpha_1, \alpha_2,\ldots,\alpha_t$ by a new color $\beta_0$ and retain the color of the remaining edges. This yields an edge coloring of $K_s$ by colors $\alpha_0$ and $\beta_0$. By Theorem \ref{Rforest}, $R(F,K_{r})=s$ and so $K_s$ contains a monochromatic copy of $F$ in color $\alpha_0$ or a monochromatic copy of $K_r$ in color $\beta_0$. If the first case occurs, we are done, and otherwise we have a monochromatic copy of $K_r$ in color $\beta_0$. Return to $c$, restricted to this set of $r$ vertices. By the definition of the Ramsey number, $c$ has a monochromatic copy of $K_{m_i}$ in color $\alpha_i$, for some $i$, $1\leq i\leq t$. Hence, $K_s\rightarrow(F,K_{m_1},\ldots,K_{m_t})$ and the proof is completed.
\end{proof}

\medskip
\begin{proposition}\label{ramseyfull}
For given positive integers $m_1,m_2,\ldots,m_t$, the $t$-tuple $(K_{m_1},\ldots,K_{m_t})$ of complete graphs  is Ramsey-full.
\end{proposition}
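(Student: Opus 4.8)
The plan is to verify the two defining conditions of Ramsey-fullness separately. Write $r=R(K_{m_1},\ldots,K_{m_t})$ and note we may assume each $m_i\geq 2$, since an $m_i=1$ makes $r=1$ and the statement vacuous. The first condition, $K_r\rightarrow(K_{m_1},\ldots,K_{m_t})$, is immediate from the very definition of $r$. The entire content therefore lies in the second condition: for every edge $e\in E(K_r)$ one has $K_r-e\nrightarrow(K_{m_1},\ldots,K_{m_t})$; equivalently, $K_r-e$ admits a $(K_{m_1},\ldots,K_{m_t})$-free coloring. Since $K_r$ is edge-transitive, it suffices to treat a single edge $e=xy$.

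The key device is a critical coloring together with a twin-vertex (vertex-duplication) trick. Because $r$ is the \emph{smallest} order forcing the pattern, $K_{r-1}\nrightarrow(K_{m_1},\ldots,K_{m_t})$, so there is a $(K_{m_1},\ldots,K_{m_t})$-free $t$-coloring $c$ of $K_{r-1}$ on a vertex set $\{w,u_1,\ldots,u_{r-2}\}$. I would transfer $c$ to $K_r-e$ by splitting $w$ into two non-adjacent copies: identify the $r-2$ common neighbours $u_1,\ldots,u_{r-2}$ with those of $K_r-e$, let $x,y$ be the endpoints of the missing edge $e$, color each edge $xu_j$ and each edge $yu_j$ with the color $c(wu_j)$, and color each edge $u_iu_j$ with $c(u_iu_j)$. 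Since $x$ and $y$ are non-adjacent in $K_r-e$, no further edges require colors, so this prescribes a $t$-coloring of all of $K_r-e$.

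It remains to check this coloring is $(K_{m_1},\ldots,K_{m_t})$-free, which is the crux. Suppose toward a contradiction that some color $i$ contains a monochromatic copy $Q$ of $K_{m_i}$. As $x$ and $y$ are non-adjacent, the clique $Q$ contains at most one of them. If $Q\subseteq\{u_1,\ldots,u_{r-2}\}$, then $Q$ is already a color-$i$ clique of size $m_i$ under $c$, contradicting that $c$ is free. Otherwise $Q$ contains exactly one of $x,y$, say $x$, and we write $Q=\{x\}\cup S$ with $S\subseteq\{u_1,\ldots,u_{r-2}\}$; by construction every edge $xu_j$ with $u_j\in S$ has color $c(wu_j)$ and the edges inside $S$ keep their colors from $c$, so $\{w\}\cup S$ is a color-$i$ clique of size $m_i$ under $c$, again a contradiction. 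Hence no monochromatic $K_{m_i}$ arises, giving $K_r-e\nrightarrow(K_{m_1},\ldots,K_{m_t})$ and completing both conditions. The only delicate point is this last verification: everything hinges on the non-adjacency of the twins $x,y$, which guarantees that any monochromatic clique in $K_r-e$ pulls back to a monochromatic clique of the same size in the critical coloring of $K_{r-1}$.
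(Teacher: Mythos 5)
Your proof is correct. It is essentially the contrapositive of the paper's argument, but carried out constructively and, in fact, more rigorously. The paper argues by contradiction: assuming $K_r-e\rightarrow(K_{m_1},\ldots,K_{m_t})$, it observes that any monochromatic $K_{m_i}$ must avoid $u$ or avoid $v$ (the endpoints of the deleted edge), and from this jumps to ``$K_r\setminus\{u\}\rightarrow(K_{m_1},\ldots,K_{m_t})$ or $K_r\setminus\{v\}\rightarrow(K_{m_1},\ldots,K_{m_t})$,'' hence $K_{r-1}\rightarrow(K_{m_1},\ldots,K_{m_t})$, contradicting minimality of $r$. As written, that deduction involves a quantifier slip: from ``every coloring of $K_r-e$ has a monochromatic copy avoiding $u$ or avoiding $v$'' one cannot immediately conclude that a \emph{single} fixed vertex works uniformly over all colorings. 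Your twin-vertex construction is precisely what repairs this: starting from a $(K_{m_1},\ldots,K_{m_t})$-free coloring $c$ of $K_{r-1}$ (which exists by minimality of $r$), you split one vertex $w$ into non-adjacent twins $x,y$ and verify explicitly that any monochromatic clique in the resulting coloring of $K_r-e$ pulls back, after replacing $x$ or $y$ by $w$, to a monochromatic clique of the same size under $c$. So the two proofs share the same key observation --- a clique in $K_r-e$ misses one endpoint of the missing edge --- but your direct construction both avoids the contradiction framing and closes the gap in the paper's own write-up; the paper's version is shorter at the cost of that unstated transfer argument.
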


\begin{proof}
Let $r=R(K_{m_1},\ldots,K_{m_t})$ and $e=uv$ be an arbitrary edge of $K_r$. We prove that $K_r-e\nrightarrow(K_{m_1},\ldots,K_{m_t})$. On the contrary, let $K_r-e\rightarrow(K_{m_1},\ldots,K_{m_t})$. This means that in any $t$-coloring of the edges of $K_r-e$ there exists some $i$, $1\leq i\leq t$, such that the $i$-th color class contains $K_{m_i}$ as a subgraph. Since $u$ and $v$ are not adjacent, such a copy of $K_{m_i}$ cannot contain $u$ and $v$ at the same time and so, such a copy would appear in the induced $t$-edge coloring of either $K_{r}\setminus\{u\}$ or $K_{r}\setminus\{v\}$. Therefore, $K_{r}\setminus\{u\}\rightarrow(K_{m_1},\ldots,K_{m_t})$ or $K_{r}\setminus\{v\}\rightarrow(K_{m_1},\ldots,K_{m_t})$, which means that $K_{r-1}\rightarrow(K_{m_1},\ldots,K_{m_t})$, a contradiction.
\end{proof}

\medskip
By Theorem \ref{Rforest} and Lemma \ref{Rcompletes}, $R(F,K_{m_1},\ldots,K_{m_t})=R(F,K_r)$ and so by Theorem \ref{starforestcomplete}, we obtain the following theorem which determines the exact value of the star-critical Ramsey number of a forest versus complete graphs of arbitrary sizes. Note that  if $H$ is an arbitrary graph whose vertices are partitioned into sets $V_1,V_2,\ldots,V_t$, then the {\it shadow graph} of $H$, denoted by $\Gamma(H)$, is a $t$-vertex graph with vertices $v_1,v_2,\ldots,v_t$ such that $v_iv_j\in E(\Gamma(H))$ if and only if $E[V_i,V_j]\neq\emptyset$.

\medskip
\begin{theorem}
Let $F$ be an arbitrary forest without isolated vertices, $m_1, \ldots,m_t$ be positive integers and $r=R(K_{m_1},K_{m_2},\ldots,K_{m_t})$. If $j_0$ is the smallest value of $j$ that realizes ${\max_{j\in C(F)}} \{(j-1)(r-2)+ \sum_{i=j}^{n(F)}ik_i(F)\}$, then,
$$R_*(F, K_{m_1},\ldots,K_{m_t})=(j_0-1)(r-3)+\sum_{i=j_0}^{n(F)}ik_i(F).$$
\end{theorem}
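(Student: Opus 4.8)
The plan is to reduce the whole statement to the two-colour result of Theorem~\ref{starforestcomplete} applied with the single complete graph $K_r$. Write $N=R(F,K_{m_1},\ldots,K_{m_t})$ and $p_0=\sum_{i=j_0}^{n(F)}ik_i(F)$. The starting observation is that Theorem~\ref{Rforest} and Lemma~\ref{Rcompletes} give
$$N=\max_{j\in C(F)}\Big\{(j-1)(r-2)+\textstyle\sum_{i=j}^{n(F)}ik_i(F)\Big\}=R(F,K_r),$$
so the host graph $K_{N-1}\sqcup K_{1,k}$ is literally the one governing $R_*(F,K_r)$, and $N=(j_0-1)(r-2)+p_0$. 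Moreover the index $j_0$ here coincides with the one in Theorem~\ref{starforestcomplete} taken with $m=r$, so the claimed value equals $r_*:=(j_0-1)(r-3)+p_0=R_*(F,K_r)$. It therefore suffices to prove $R_*(F,K_{m_1},\ldots,K_{m_t})=R_*(F,K_r)$.

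For the upper bound I would take an arbitrary $(t+1)$-colouring of $G=K_{N-1}\sqcup K_{1,r_*}$ with colours $\alpha_0,\alpha_1,\ldots,\alpha_t$ and, exactly as in Lemma~\ref{Rcompletes}, merge $\alpha_1,\ldots,\alpha_t$ into one colour $\beta_0$ while keeping $\alpha_0$. Since $R_*(F,K_r)=r_*$ and $N=R(F,K_r)$, Theorem~\ref{starforestcomplete} forces either a copy of $F$ in colour $\alpha_0$, in which case we are done, or a copy of $K_r$ in colour $\beta_0$. In the latter case I restrict the original colouring to these $r$ vertices; they carry only the colours $\alpha_1,\ldots,\alpha_t$, and because $r=R(K_{m_1},\ldots,K_{m_t})$ some colour $\alpha_i$ contains $K_{m_i}$. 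Hence $G\rightarrow(F,K_{m_1},\ldots,K_{m_t})$ and $R_*(F,K_{m_1},\ldots,K_{m_t})\le r_*$.

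For the lower bound I would start from the extremal $(F,K_r)$-free colouring of $H=K_{N-1}\sqcup K_{1,r_*-1}$ built in the proof of Theorem~\ref{starforestcomplete}: the red edges lie inside the parts $V_1,\ldots,V_{r-1}$, where $|V_i|=j_0-1$ for $i\le r-2$ and $|V_{r-1}|=p_0-1$, while the blue edges form the complete $(r-1)$-partite graph between these parts together with the apex $v$ joined in blue to $V_2,\ldots,V_{r-1}$. I keep the red edges in a forest colour $\alpha_0$ and refine the blue class. Absorbing $v$ into $V_1$ turns the blue graph into a blow-up of $K_{r-1}$ whose shadow graph $\Gamma$ equals $K_{r-1}$; I then fix a $(K_{m_1},\ldots,K_{m_t})$-free $t$-colouring $\chi$ of $K_{r-1}$, which exists by definition of $r$, and colour every blue edge joining parts $a$ and $b$ with the colour $\chi(ab)$.

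The forest colour $\alpha_0$ still avoids $F$ for the size reasons recorded in Theorem~\ref{starforestcomplete}, so the only point to check --- and the main obstacle --- is that colour $\alpha_i$ avoids $K_{m_i}$; this is where the blow-up viewpoint does the work. Any monochromatic $K_{m_i}$ in colour $\alpha_i$ meets each part in at most one vertex, since same-part pairs are red, so its $m_i$ vertices project to $m_i$ distinct vertices of $\Gamma=K_{r-1}$ spanning a $K_{m_i}$ that is monochromatic of colour $i$ under $\chi$, contradicting the choice of $\chi$. Thus $H$ admits an $(F,K_{m_1},\ldots,K_{m_t})$-free colouring, giving $H\nrightarrow(F,K_{m_1},\ldots,K_{m_t})$ and $R_*(F,K_{m_1},\ldots,K_{m_t})\ge r_*$. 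Combining the two bounds yields the asserted equality.
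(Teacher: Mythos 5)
Your proof is correct, and while your upper bound is exactly the paper's argument (merge $\alpha_1,\ldots,\alpha_t$ into $\beta_0$, apply the two-colour result with $K_r$, then unmerge inside the monochromatic $K_r$), your lower bound takes a genuinely different and somewhat cleaner route. The paper keeps the pendant vertex $v$ as its own part, so the $\beta_0$-coloured graph is an $r$-partite graph whose shadow graph is $K_r$ minus an edge; to refine $\beta_0$ into a $(K_{m_1},\ldots,K_{m_t})$-free colouring it must then invoke Proposition \ref{ramseyfull} (Ramsey-fullness of tuples of complete graphs), which it proves separately. You instead absorb $v$ into $V_1$ — legitimate, since $v$ has no edges to $V_1$ and all edges inside $V_1$ carry the forest colour $\alpha_0$, so the merged part spans no $\beta_0$-edges — making the $\beta_0$-graph a complete $(r-1)$-partite graph with shadow graph $K_{r-1}$. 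Refining it then needs only the definition of $r$ (existence of a $(K_{m_1},\ldots,K_{m_t})$-free colouring of $K_{r-1}$), and the projection argument (any $\alpha_i$-clique meets each part at most once) finishes the job. In effect, your merging of $v$ into $V_1$ performs inline the vertex-duplication idea underlying Ramsey-fullness, so your proof bypasses Proposition \ref{ramseyfull} entirely; the paper's version buys a reusable standalone fact (Ramsey-fullness of complete tuples), while yours buys economy of means. Both arguments share the same implicit non-degeneracy hypothesis ($r\geq 3$, so that Theorem \ref{starforestcomplete} applies with $m=r$), so this is not a defect of your write-up relative to the paper's.
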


\begin{proof}
Let $r_*$ be the claimed number for $R_*(F, K_{m_1},\ldots,K_{m_t})$. Note that by Lemma \ref{Rcompletes}, $R(F, K_r)=R(F, K_{m_1},\ldots,K_{m_t})=(j_0-1)(r-2)+ \sum_{j_0}^{n}$ and by Theorem \ref{starforestcomplete}, $R_*(F,K_r)=r_*$. 

\medskip
For the lower bound, we prove that if $H=K_{R(F,K_r)-1}\sqcup K_{1,r_*-1}$, then $H\nrightarrow(F, K_{m_1},\ldots,K_{m_t})$.  For this purpose, partition the vertices of $K_{R(F,K_r)-1}$ into $(r-1)$ parts $V_1,\ldots,V_{r-1}$ such that for every $i$, $1\leq i\leq r-2$, $|V_i|=j_0-1$ and $|V_{r-1}|=\sum_{j_0}^{n}-1$. Color all edges with both ends in $V_i$, $1\leq i\leq r-1$, by $\alpha_0$ and the rest by $\beta_0$. Now, add a vertex $v$  adjacent to every vertex in $V_i$, $2\leq i\leq r-1$, by color $\beta_0$. Clearly,  we have a 2-coloring of the edges of $H$ with colors $\alpha_0$ and $\beta_0$ such that the subgraph spanned by the edges of color $\alpha_0$, $H^{\alpha_0}$, does not contain $F$ and the subgraph spanned by the edges of color $\beta_0$, $H^{\beta_0}$, does not contain $K_r$ as a subgraph. Note that $H^{\beta_0}$ is an $r$-partite subgraph of $H$ with partite sets $(\{v\},V_1,V_2,\ldots,V_{r-1})$, which is not a complete $r$-partite graph. Let $\Gamma(H^{\beta_0})$ be the shadow graph of $H^{\beta_0}$. Clearly, $\Gamma(H^{\beta_0})$ is a non-complete graph on $r$ vertices and so, by Proposition \ref{ramseyfull}, $\Gamma(H^{\beta_0})$ has a $(K_{m_1},\ldots,K_{m_t})$-free coloring. Consider such a  $(K_{m_1},\ldots,K_{m_t})$-free coloring of $\Gamma(H^{\beta_0})$  with colors $\alpha_1, \alpha_2,\ldots,\alpha_t$. Now, each edge of $\Gamma(H^{\beta_0})$ in the original graph $H^{\beta_0}$ expands into a complete bipartite subgraph. Recolor all edges in these subgraphs with the  color that its original edge in $\Gamma(H^{\beta_0})$ had. This yields a $(F,K_{m_1},\ldots,K_{m_t})$-free coloring of  $H$ with $t+1$ colors $\alpha_0,\alpha_1, \alpha_2,\ldots,\alpha_t$.

\bigskip
For the upper bound, let $c$ be an arbitrary $(t+1)$-coloring of the edges of $G=K_{R(F,K_r)-1}\sqcup K_{1,r_*}$ by colors $\alpha_0,\alpha_1,\ldots,\alpha_t$. We prove that $G\rightarrow (F, K_{m_1},\ldots,K_{m_t})$. Recolor all edges of colors $\alpha_1, \alpha_2,\ldots,\alpha_t$ by a new color $\beta_0$ and retain the color of the remaining edges. This yields a 2-edge coloring of  $G=K_{R(F,K_r)-1}\sqcup K_{1,r_*}$ by colors $\alpha_0$ and $\beta_0$ and so, by the definition, $G$ contains either a monochromatic copy of $F$ in color $\alpha_0$ or a monochromatic  copy of $K_r$ in color $\beta_0$. If we have a monochromatic copy of $K_r$ in color $\beta_0$, then return to $c$, restricted to this set of $r$ vertices. By the definition of the Ramsey number, the coloring induced by $c$ to this set of $r$ vertices has a monochromatic copy of $K_{m_i}$ in color $\alpha_i$, for some $i$, $1\leq i\leq t$. Therefore, $G\rightarrow(F,K_{m_1},\ldots,K_{m_t})$ and the proof is completed.
\end{proof}

\medskip
Let $G_1, G_2, \ldots , G_t$ be given graphs, $r=R(G_1, G_2, \ldots, G_t)$ and $r_{*}=R_{*}(G_1, G_2, \ldots, G_t)$. By the definition of the star-critical Ramsey number, $G=K_{r-1}\sqcup K_{1,r_{*}}$ is a graph of size  ${r-1 \choose 2}+r_{*}$ such that $G\rightarrow(G_1, G_2, \ldots, G_t)$. Therefore,  for given graphs $G_1, G_2, \ldots , G_t$, it can be easily seen that  $$\hat{R}(G_1, G_2, \ldots , G_t) \leq {R(G_1, G_2, \ldots, G_t)-1 \choose 2}+R_*(G_1, G_2, \ldots, G_t).$$

Therefore, if $F$ is an arbitrary forest and $m\geq 3$, then

\begin{equation}\label{equ2}
\hat{R}(F, K_{m}) \leq {R(F, K_{m})-1 \choose 2}+ R_*(F,K_m).
\end{equation}

Let $j_0$ be the smallest value of $j$ that realizes $\max_{j\in C(F)} \{(j-1)(m-2)+ \sum_{i=j}^{n(F)}ik_i(F)\}$. By Theorem \ref{starforestcomplete} and  (\ref{equ2}), we deduce that

\begin{equation}\label{equ}
\hat{R}(F, K_{m}) \leq {R(F, K_{m}) \choose 2}- (j_0-2).
\end{equation}

In particular, if $F$ is a matching of size $t$, i.e. $F=tK_2$, then by Theorem \ref{Rforest}, ${R}(tK_2, K_{m})={m+2t-2}$ and so by (\ref{equ}),  $\hat{R}(tK_2, K_{m})\leq{m+2t-2 \choose 2}.$ Erd\H{o}s and Faudree in \cite{s10} proved that if $m$ and $t$ are positive integers and $m\geq 4t-1$, then $\hat{R}(tK_2, K_{m})={m+2t-2 \choose 2}.$ Thus, if $F$ is a matching of size $t$, then equality holds in (\ref{equ}),  which means that the bound presented in (\ref{equ}) is best possible.

\section*{Conflict of interest}
The authors have no conflicts of interest to declare. All  authors  have  seen  and  agree  with  the  contents  of  the  manuscript  and there  is  no  financial  interest  to  report.  We  certify  that  the  submission  is original work and is not under review at any other publication.

\small

\medskip

\end{document}